\newtheorem{theorem}{Theorem}[section]
\newtheorem{remark}{Remark}[section]
\newcounter{Ex} 
\newcounter{Example-ENM}
\newcounter{Example-A} 
\newcounter{Example-B} 
\def\({\left(}
\def\){\right)}
\newlength\savedwidth
\begin{document}

\setcounter{page}{0}
\begin{flushleft}
{\scriptsize TWMS J. Pure Appl. Math., V.XX, N.XX, 20XX, pp.XX-XX}
\end{flushleft}
\bigskip
\bigskip
\title [A.N. Imomkulov: Classification of a family of ... ]
{Classification of a family of three dimensional real evolution algebras}

\author[TWMS J. Pure Appl. Math., V.XX, N.XX, 20XX] {A.N. Imomkulov$^{1}$}
\thanks{$^1${Institute of Mathematics},
 {Tashkent, Uzbekistan},\\
 \indent \,\,\, e-mail: aimomkulov@gmail.com\\
\indent \,\, \em Manuscript received XXXXXXXX 20XX}

\begin{abstract}
In this paper we classify a family of three-dimensional
real evolution algebras. We also consider an evolution
operator for an evolution algebra and find fixed
points of this operator for two and three-dimensional cases. Then we
construct an evolution algebra, the matrix of structural
constants of which is Jacobian of the evolution operator at a
fixed point. We study isomorphism
between these evolution algebras.

\bigskip
\noindent Keywords: Evolution algebra, evolution operator,
isomorphism, Jacobian, fixed point.

\bigskip\noindent AMS Subject Classification: 13J30, 13M05.
\end{abstract}
\maketitle

\smallskip
\section{Introduction} \label{sec:intro}

To study a non-linear function one usually finds the linear approximation 
to the function at a given point. The linear approximation of a function 
is the first order Taylor expansion around the point of interest.
In the theory of dynamical systems, linearization is a method for 
assessing the local stability of an equilibrium point. For an algebra 
(with a fixed multiplication $\ast$) and the (cubic) matrix $\mathcal M$ of
structural constants one can define a quadratic (non-linear) operator
 $F(x)=x\ast x$, with coefficients given by the matrix $\mathcal M$.
The Jacobian $J$ of $F$ at a given point, can be considered as a linear 
approximation of $F$. Consequently, $J$ generates an evolution algebra 
as its matrix of structural constants. 

Let $(A,\cdot)$ be an algebra over a field $K$. If it admits a countable basis
 $e_{1},e_{2},\dots,e_{n},\dots$, such that
 $$\begin{array}{cc}
 e_{i}\cdot e_{j}=0, \,\, if \,\,  i\neq j\\[2mm]
 e_{i}\cdot e_{i}=\sum_{k} a_{ik}e_{k}, \,\, for\,\, any\,\, i
\end{array}$$
then it is called an evolution algebra. This basis is called a natural basis.

We note that to every evolution algebra corresponds a square matrix
$(a_{ik})$ of structural constants of the given evolution algebra.\\
In \cite{10} the following basic properties of evolution algebras are proved:\\
1) Evolution algebras are not associative, in general.\\
2) Evolution algebras are commutative, flexible.\\
3) Evolution algebras are not power-associative, in general.\\
4) The direct sum of evolution algebras is also an evolution algebra.

In \cite{7} the dynamics of absolutely
nilpotent and idempotent elements in chains generated by two-dimensional
evolution algebras are studied. In \cite{2} the authors consider
an evolution algebra which has a rectangular matrix of structural constants.
This algebra is called evolution algebras of  \,
\textquotedblleft chicken\textquotedblright\,  population (EACP).
The mentioned paper is devoted to the description of structure of EACPs. Using the
Jordan form of the rectangular matrix of structural constants, a simple
description of EACPs over the field of complex numbers is given.
The classification of three-dimensional complex EACPs is obtained.
Moreover, some $(n + 1)$-dimensional EACPs are described. The fundamentals of 
evolution algebras have been being developed in the last years with no 
probabilistic restrictions on the stucture constants \cite{4,5,8}.

In Section \ref{sec:two} we study an approximation of two-dimensional
real evolution algebras and isomorphism
between these algebras. In Section \ref{section4} we will classify
a family of three-dimensional real evolution algebras.
We show that there are 13 class of such evolution algebras.
We consider an approximation of three-dimensional real evolution
algebras in Section \ref{section:5} and also study
isomorphism between these algebras.

\section{Approximation of two-dimensional real evolution algebras}\label{sec:two}

Let $E$ be a $2$-dimensional evolution algebra over the field of real numbers.
Such algebras are classified in \cite{6}:
\begin{theorem}\cite{6}\label{thmurodov}
Any two-dimensional real evolution algebra $E$ is isomorphic to one of the following
pairwise non-isomorphic algebras:

(i) $dim(E^{2})=1$:

$E_{1}$: $e_{1}e_{1}=e_{1}$, $e_{2}e_{2}=0$;

$E_{2}$: $e_{1}e_{1}=e_{1}$, $e_{2}e_{2}=e_{1}$;

$E_{3}$: $e_{1}e_{1}=e_{1}+e_{2}$, $e_{2}e_{2}=-e_{1}-e_{2}$;

$E_{4}$: $e_{1}e_{1}=e_{2}$, $e_{2}e_{2}=0$;

$E_{5}$: $e_{1}e_{1}=e_{2}$, $e_{2}e_{2}=-e_{2}$;

(ii) $dim(E^{2})=2$:

$E_{6}(a_{2};a_{3})$: $e_{1}e_{1}=e_{1}+a_{2}e_{2}$, $e_{2}e_{2}=a_{3}e_{1}+e_{2}$;
$1-a_{2}a_{3}\neq0$, $a_{2},a_{3}\in\mathbb{R}$. Moreover $E_{6}(a_{2};a_{3})$
is isomorphic to  $E_{6}(a_{3};a_{2})$.

$E_{7}(a_{4})$: $e_{1}e_{1}=e_{2}$, $e_{2}e_{2}=e_{1}+a_{4}e_{4}$, where $a_{4}\in\mathbb{R}$;
\end{theorem}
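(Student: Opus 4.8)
The plan is to treat a two-dimensional real evolution algebra $E$ through its structural matrix $A=(a_{ij})$, defined by $e_1 e_1 = a_{11}e_1 + a_{12}e_2$, $e_2 e_2 = a_{21}e_1 + a_{22}e_2$ and $e_1 e_2 = 0$. The first observation is that the derived algebra $E^2 = \operatorname{span}\{e_1 e_1,\, e_2 e_2\}$ has dimension equal to $\operatorname{rank} A$, and since $\dim E^2$ is preserved by any isomorphism, it is a coarse invariant that splits the problem into the two cases $\dim E^2 = 1$ and $\dim E^2 = 2$ recorded in the statement (the trivial algebra $A=0$ being excluded).

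Next I would work out the isomorphism relation at the level of matrices. Writing a candidate isomorphism $\phi$ as an invertible map sending $e_i \mapsto \sum_j p_{ij} e_j'$, the homomorphism conditions $\phi(e_i e_j) = \phi(e_i)\phi(e_j)$ translate into a polynomial system in the $p_{ij}$ and the two sets of structural constants; in particular $\phi(e_1 e_2)=\phi(e_1)\phi(e_2)=0$ forces the relation $p_{11}p_{21}(e_1')^2 + p_{12}p_{22}(e_2')^2 = 0$. The purpose of this step is to identify the admissible changes of basis: when $A$ is invertible one shows $\phi$ must carry the natural basis to a natural basis, so the residual freedom reduces to rescalings $e_i \mapsto \lambda_i e_i$ and the swap $e_1 \leftrightarrow e_2$, under which $a_{ik}$ is sent to $\lambda_i^2 \lambda_k^{-1} a_{ik}$.

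For the case $\dim E^2 = 2$, I would use these rescalings to normalize the diagonal entries of $A$ and thereby reduce to the two normal forms $E_6(a_2;a_3)$ and $E_7(a_4)$; the permutation $e_1 \leftrightarrow e_2$ is exactly what produces the stated coincidence $E_6(a_2;a_3)\cong E_6(a_3;a_2)$, while the surviving scalar $a_4$ and the unordered pair $\{a_2,a_3\}$ are continuous invariants that cannot be removed. The harder case is $\dim E^2 = 1$: here $A$ is singular, the non-degeneracy argument fails, and isomorphisms need not preserve the natural basis, so one must solve the full polynomial system for a general invertible $\phi$. I would stratify this case according to whether $E^2$ is spanned by an idempotent or a nilpotent direction and whether the generators square into $E^2$ or to zero, extracting the five forms $E_1,\dots,E_5$.

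Finally, the completeness of the list must be matched by pairwise non-isomorphism. I expect this verification to be the main obstacle: one distinguishes the classes by discrete invariants such as the existence of nonzero idempotents, the presence of absolute nilpotents, and the nil or power-associativity type, together with the continuous invariants of the rank-two family. The delicacy is concentrated in the rank-one case, where the freedom of a general (non-natural) change of basis makes it subtle both to collapse genuinely isomorphic forms onto a single representative and to certify that the remaining representatives are truly distinct.
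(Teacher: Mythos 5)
You should first be aware that the paper contains no proof of this statement: Theorem \ref{thmurodov} is imported verbatim from Murodov \cite{6}, so there is no internal argument to compare against line by line; the closest analogue in the paper is the method of Section \ref{section4}, where the three-dimensional case with $\dim(E^2)=1$ is classified. Measured against that, your proposal is correct in outline and follows the standard route. Your nondegenerate case is essentially complete as sketched: from $\phi(e_1)\phi(e_2)=0$ and the linear independence of $(e_1')^2,(e_2')^2$ when $\operatorname{rank}A=2$ you correctly force the transition matrix to be monomial, and the action $a_{ik}\mapsto\lambda_i^2\lambda_k^{-1}a_{ik}$ together with the swap does yield exactly $E_6(a_2;a_3)$ (with $1-a_2a_3\neq0$ from invertibility, and the unordered pair $\{a_2,a_3\}$ as invariant) and $E_7(a_4)$; note that the uniqueness of $a_4$ over $\mathbb{R}$ rests on real cube roots being unique, which is precisely where the real and complex classifications diverge, and it would strengthen your write-up to say so explicitly. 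Your rank-one case is the thinnest part, but the plan is sound and can be made efficient: writing $e_i^2=c_iv$ with $E^2=\langle v\rangle$, the algebra is governed by the quadratic form $q(\alpha,\beta)=c_1\alpha^2+c_2\beta^2$ (so $x^2=q(x)v$) together with whether $q(v)=0$, i.e.\ whether $v$ is nilpotent or can be rescaled to an idempotent; the real signature of $q$ (definite, indefinite, degenerate) intersected with this dichotomy gives exactly the five classes $E_1,\dots,E_5$ (definiteness is incompatible with $q(v)=0$, which is why only five, not six, classes occur). Your proposed separating invariants — existence of a nonzero idempotent and the number of lines of absolute nilpotents — do distinguish all five, which is a slightly different bookkeeping than the paper uses in its own degenerate classification (Section \ref{section4} separates classes via natural-basis normalizations and the existence of degenerate two-dimensional evolution ideals / the extension property); both devices work, and yours is arguably more elementary in dimension two, while the paper's ideal-theoretic criterion scales better to higher dimensions.
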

For a given evolution algebra $(E,\cdot)$
an evolution operator has the following form $F(x)=x\cdot x=x^2$.
If $x=\sum_{i=1}^n x_{i}e_{i}$ then
 $$x^2=\sum_{i=1}^n x_{i}^2 e_{i}^2=\sum_{i=1}^n x_{i}^2\big(\sum_{k=1}^n a_{ik} e_{k}\big)=
 \sum_{k=1}^n\big(\sum_{i=1}^n a_{ik} x_{i}^2\big)e_{k}.$$
 We denote $x_{k}'=\sum_{i=1}^n a_{ik} x_{i}^2$. Thus we have the following operator, $F:E\rightarrow E$,
 $$F:x_{k}'=\sum_{i=1}^n a_{ik} x_{i}^2, k=\overline{1,n}.$$

Jacobian of the operator $F$ at the point $x$ for two-dimensional case has a form
\begin{displaymath}
J_{F}(x)=\left( \begin{array}{cccc}
2a_{11}x_{1} & 2a_{21}x_{2} \\
2a_{12}x_{1} & 2a_{22}x_{2} \\
\end{array} \right).
\end{displaymath}

Following \cite{9} and \cite{3} we define an evolution algebra $\widetilde E$ with matrix
$J_{F}(x)$ as the matrix of structural constants.\\

We will find fixed points of this operator, i.e. solutions of $F(x)=x:$
\begin{equation}
\label{fix:point}
\left\{ \begin{array}{ll}
x_{1}=a_{11}x_{1}^2+a_{21}x_{2}^2, \\[2mm]
x_{2}=a_{12}x_{1}^2+a_{22}x_{2}^2.
\end{array} \right.
\end{equation}
Note that $(0,0)$ is one of solutions of system of equations (\ref{fix:point}), and
\begin{displaymath}
J_{F}(0,0)=\left( \begin{array}{cccc}
0 & 0 \\
0 & 0 \\
\end{array} \right).
\end{displaymath}
So corresponding evolution algebra with matrix $J_{F}(0,0)$ is trivial.

Trivial evolution algebras are not interesting. So we will find a
non-zero solutions denoted by $(x_{1}^0;x_{2}^0)$ of (\ref{fix:point})
for algebras $E_{i}, \,\, i=1,2,\dots,7$ mentioned in Theorem \ref{thmurodov} 
and study isomorphisms of evolution algebras
corresponding to these fixed points with other evolution algebras.

In the following table we give all possibilities for two-dimensional case:\\

\begin{tabular}{l|c|l}
\hline
2-dimensional real & Non-zero real fixed& Corresponding evolution\\
evolution algebras& points of the operator $F$& algebras to fixed points\\
\hline
$E_{1}:\left(\begin{array}{cccc}
1 & 0\\
0 & 0\end{array}\right)$ &
$(1;0)$ &
$\widetilde E_{1}:\left(\begin{array}{cccc}
2 & 0\\
0 & 0\end{array}\right)$\\[3mm]
$E_{2}:\left(\begin{array}{cccc}
1 & 0\\
1 & 0\end{array}\right)$ &
$(1;0)$ &
$\widetilde E_{2}:\left(\begin{array}{cccc}
2 & 0\\
0 & 0\end{array}\right)$\\[3mm]
$E_{3}:\left(\begin{array}{cccc}
1 & 1\\
-1 & -1\end{array}\right)$ &
Not exists &
Not exists\\[3mm]
$E_{4}:\left(\begin{array}{cccc}
0 & 1\\
0 & 0\end{array}\right)$ &
Not exists &
Not exists\\[3mm]
$E_{5}:\left(\begin{array}{cccc}
0 & 1\\
0 & -1\end{array}\right)$ &
$(0;-1)$ &
$\widetilde E_{5}:\left(\begin{array}{cccc}
0 & 0\\
0 & 2\end{array}\right)$\\[3mm]
$E_{6}(a_{2};a_{3}):\left(\begin{array}{cccc}
1 & a_{2}\\
a_{3} & 1\end{array}\right)$ &
$(x_{1}^0;x_{2}^0)$ &
$\widetilde E_{6}(a_{2};a_{3}):\left(\begin{array}{cccc}
2x_{1}^0 & 2a_{3}x_{2}^0\\
2a_{2}x_{1}^0 & 2x_{2}^0\end{array}\right)$\\[3mm]
$E_{7}(a_{4}):\left(\begin{array}{cccc}
0 & 1\\
1 & a_{4}\end{array}\right)$ &
$(x_{1}^0;x_{2}^0)$ if $a_{4}\geq-\frac{3}{\sqrt[3]{4}}$ &
$\widetilde E_{7}(a_{4}):\left(\begin{array}{cccc}
0 & 2x_{2}^0\\
2x_{1}^0 & 2a_{4}x_{2}^0\end{array}\right)$\\
\hline
\end{tabular}\\

We have the following theorem.
\begin{theorem}
$i)$ Evolution algebras $\widetilde E_{1}$, $\widetilde E_{2}$ and $\widetilde E_{5}$ are
isomorphic to $E_{1}$; \\
$ii)$ $\widetilde E_{6}(a_{2};a_{3})$ is isomorphic to the evolution
algebra $E_{6}(b_{2};b_{3})$, where $b_{2}=a_{3}\left(\frac{x_{2}^0}{x_{1}^0}\right)^2$,
\,\,\,\,\,\,\,\,\,\,\,\,\,\,\,\,\,\,\,\,\,\,\,\
$b_{3}=a_{2}\left(\frac{x_{1}^0}{x_{2}^0}\right)^2$;\\
$iii)$ $\widetilde E_{7}(a_{4})$ is isomorphic to $E_{7}(b_{4})$, 
where $b_{4}=a_{4}\sqrt[3]{(\frac{x_{2}^0}{x_{1}^0})^2}$.
\end{theorem}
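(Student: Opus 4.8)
My plan is to produce each claimed isomorphism by an explicit rescaling of the natural basis rather than by a general linear map: a diagonal substitution $f_{1}=\lambda e_{1}$, $f_{2}=\mu e_{2}$ (with a possible swap $f_{1}=\lambda e_{2}$, $f_{2}=\mu e_{1}$) sends a natural basis to a natural basis, so it automatically keeps us inside the class of evolution algebras, and under it the structural products transform by the simple rule $f_{i}f_{i}=\lambda_{i}^{2}\,e_{i}e_{i}$. Matching the resulting coefficients against the target multiplication table then reduces each claim to a small scalar system. Throughout I work under the standing assumption that the nonzero fixed point $(x_{1}^{0};x_{2}^{0})$ has both coordinates nonzero, which is exactly what makes the scaling factors $\lambda,\mu$ and the ratios $x_{2}^{0}/x_{1}^{0}$, $x_{1}^{0}/x_{2}^{0}$ well defined.

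For part $i)$ I would read the matrices of $\widetilde E_{1},\widetilde E_{2},\widetilde E_{5}$ off the table. The first two equal $\mathrm{diag}(2,0)$, so the choice $f_{1}=\tfrac12 e_{1}$, $f_{2}=e_{2}$ gives $f_{1}f_{1}=\tfrac14\cdot 2e_{1}=f_{1}$ and $f_{2}f_{2}=0$, which is the multiplication table of $E_{1}$. The algebra $\widetilde E_{5}$ has matrix $\mathrm{diag}(0,2)$, so I first interchange the two basis vectors and then rescale, $f_{1}=\tfrac12 e_{2}$, $f_{2}=e_{1}$, again obtaining $f_{1}f_{1}=f_{1}$, $f_{2}f_{2}=0$. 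This settles $i)$.

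For part $ii)$ I would substitute $f_{1}=\lambda e_{1}$, $f_{2}=\mu e_{2}$ into the table of $\widetilde E_{6}(a_{2};a_{3})$ and equate coefficients with the normal form $E_{6}$. The two diagonal equations immediately force $\lambda=\tfrac{1}{2x_{1}^{0}}$ and $\mu=\tfrac{1}{2x_{2}^{0}}$, and the two off-diagonal equations then define $b_{2}$ and $b_{3}$; substituting these values of $\lambda,\mu$ yields $b_{2}=a_{3}(x_{2}^{0}/x_{1}^{0})^{2}$ and $b_{3}=a_{2}(x_{1}^{0}/x_{2}^{0})^{2}$. I would finish by checking that the image is a legitimate member of the $E_{6}$ family, i.e. that $1-b_{2}b_{3}\neq 0$; this is automatic because $b_{2}b_{3}=a_{2}a_{3}$, and $1-a_{2}a_{3}\neq 0$ holds for $E_{6}(a_{2};a_{3})$ by definition.

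The $E_{7}$ case is where the real work lies, and it is the main obstacle. Here the vanishing diagonal entry couples the two scalars: equating coefficients gives $\mu=2\lambda^{2}x_{2}^{0}$ together with $\lambda=2\mu^{2}x_{1}^{0}$, a genuinely nonlinear pair rather than two independent equations. Eliminating $\mu$ produces $\lambda^{3}=\bigl(8(x_{2}^{0})^{2}x_{1}^{0}\bigr)^{-1}$, whose unique real cube root, well defined for any real argument, which is precisely why the statement carries a $\sqrt[3]{\cdot}$, furnishes admissible real nonzero $\lambda$ and hence $\mu$ (nonvanishing is guaranteed since a nontrivial fixed point of $E_{7}$ satisfies $x_{1}^{0}=(x_{2}^{0})^{2}\neq 0$). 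Reading off the remaining coefficient gives $b_{4}=2\mu a_{4}x_{2}^{0}=4\lambda^{2}a_{4}(x_{2}^{0})^{2}$, and inserting $\lambda^{2}=\bigl(4((x_{2}^{0})^{4}(x_{1}^{0})^{2})^{1/3}\bigr)^{-1}$ collapses this to $b_{4}=a_{4}\sqrt[3]{(x_{2}^{0}/x_{1}^{0})^{2}}$. The delicate part is thus the bookkeeping of this cube-root solution: one must confirm consistency of the nonlinear system, reality and nonvanishing of its solution, and that the exponents combine to the claimed $\tfrac{2}{3}$-power; the corresponding verifications in $i)$ and $ii)$ are routine by comparison.
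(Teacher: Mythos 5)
Your proposal is correct and follows essentially the same route as the paper: the paper's proof consists precisely of these diagonal changes of natural basis (with the swap $\widetilde e_{1}=\tfrac12 e_{2}$, $\widetilde e_{2}=e_{1}$ for $\widetilde E_{5}$, the scalings $\tfrac{1}{2x_{1}^{0}}$, $\tfrac{1}{2x_{2}^{0}}$ for $\widetilde E_{6}$, and the cube-root scalings $2\sqrt[3]{x_{1}^{0}(x_{2}^{0})^{2}}$, $2\sqrt[3]{(x_{1}^{0})^{2}x_{2}^{0}}$ for $\widetilde E_{7}$, which are exactly the reciprocals of your $\lambda,\mu$ because the paper writes the $E_{7}$ isomorphism in the opposite direction). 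Your added bookkeeping --- deriving $\lambda,\mu$ from the coefficient equations, solving the coupled system $\mu=2\lambda^{2}x_{2}^{0}$, $\lambda=2\mu^{2}x_{1}^{0}$, and verifying $1-b_{2}b_{3}=1-a_{2}a_{3}\neq0$ and the nonvanishing of the fixed-point coordinates --- merely makes explicit what the paper leaves implicit.
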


\begin{proof}
$\widetilde E_{1}\approxeq E_{1}$: By the change of basis $\widetilde e_{1}=\frac{1}{2}e_{1}$
we can prove that the evolution algebra $\widetilde E_{1}$ is isomorphic to $E_{1}$.\\
$\widetilde E_{2}\approxeq E_{1}$: It is similar to the above proof.\\
$\widetilde E_{5}\approxeq E_{1}$: By the change of basis
$\widetilde e_{1}=\frac{1}{2}e_{2}$, $\widetilde e_{2}=e_{1}$ we can
prove that the evolution algebra $\widetilde E_{5}$ is isomorphic to $E_{1}$.\\
$\widetilde E_{6}(a_{2};a_{3})\approxeq E_{6}(b_{2};b_{3})$:
We can see this by the change of basis $\widetilde e_{1}=\frac{1}{2x_{1}^0}e_{1}$ and
$\widetilde e_{2}=\frac{1}{2x_{2}^0}e_{2}$.\\
$\widetilde E_{7}(a_{4})\approxeq E_{7}(b_{4})$:
We can see this by the change of basis $\widetilde e_{1}=2\sqrt[3]{x_{1}^0(x_{2}^0)^2}e_{1}$ and
$\widetilde e_{2}=2\sqrt[3]{(x_{1}^0)^2x_{2}^0}e_{2}$.
\end{proof}

\section{Three-dimensional real evolution algebras with $dim(E^2)=1$}\label{section4}

In \cite{1} three dimensional complex evolution algebras are classified.
Now we shall consider classification of three dimensional real evolution algebras.

Fix a three-dimensional real evolution algebra $E$ and a natural
basis $B=\{e_{1},e_{2},e_{3}\}$. Let $M_{B}$ be the matrix of 
structural constants of $E$ relative to $B$:
$$M_{B}=\left(\begin{array}{cccccc}
a_{11} & a_{12} & a_{13}\\
a_{21} & a_{22} & a_{23}\\
a_{31} & a_{32} & a_{33}
\end{array}\right).$$

In order to classify three dimensional real evolution algebras
with condition $dim(E^2)=1$ we find a basis of $E$ for which
its structure matrix has an expression as simple as possible, where by
'simple' we mean with the maximal number of $0$, $1$ and $-1$ in the entries.

Let $dim(E^2)=1$. Without loss of generality we may assume $e_{1}^2\neq 0$.
Write $e_{1}^2=a_{1}e_{1}+a_{2}e_{2}+a_{3}e_{3}$, where $a_{i}\in\mathbb{R}$
and $a_{i}\neq 0$ for some $i$. Note that ${e_{1}^2}$ is basis of $E^2$.

Since $e_{2}^2,e_{3}^2\in E^2$, there exist $c_{1},c_{2}\in\mathbb{R}$ such that
$$e_{2}^2=c_{1}e_{1}^2=c_{1}(a_{1}e_{1}+a_{2}e_{2}+a_{3}e_{3}),$$
$$e_{3}^2=c_{2}e_{1}^2=c_{2}(a_{1}e_{1}+a_{2}e_{2}+a_{3}e_{3}).$$

Then $$M_{B}=\left(\begin{array}{cccccc}
a_{1} & a_{2} & a_{3}\\
c_{1}a_{1} & c_{1}a_{2} & c_{1}a_{3}\\
c_{2}a_{1} & c_{2}a_{2} & c_{2}a_{3}
\end{array}\right).$$

We analyze when $E^2$ has the extension property. This means that there
exists a natural basis $B'=\{e_{1}',e_{2}',e_{3}'\}$ of $E$ with
\begin{equation}\begin{array}{cc}
e_{1}'=e_{1}^2=a_{1}e_{1}+a_{2}e_{2}+a_{3}e_{3} \\
e_{2}'=\alpha e_{1}+\beta e_{2}+\gamma e_{3} \\
e_{3}'=\delta e_{1}+\nu e_{2}+\eta e_{3}
\end{array}
\end{equation}
for some $\alpha,\beta,\gamma,\delta,\nu,\eta\in\mathbb{R}$
such that $\beta\eta-\gamma\nu\neq 0$.
This implies that
\begin{equation}\label{detB}
\left|P_{B'B}\right|=\left|\begin{array}{cccccc}
a_{1} & a_{2} & a_{3}\\
\alpha & \beta & \gamma\\
\delta & \nu & \eta
\end{array}\right|\neq 0.
\end{equation}

By products $e_{1}'e_{2}'=0, e_{1}'e_{3}'=0, e_{2}'e_{3}'=0$,
$B'$ is a natural basis if and only if the following conditions are satisfied:
\begin{equation}\label{alphauchun}
\alpha a_{1}+\beta a_{2}c_{1}+\gamma a_{3}c_{2}=0
\end{equation}
\begin{equation}\label{deltauchun}
\delta a_{1}+\nu a_{2}c_{1}+\eta a_{3}c_{2}=0
\end{equation}
$$\alpha\delta+\beta\nu c_{1}+\gamma\eta c_{2}=0.$$

In the above conditions, the structure matrix of $E$ relative to $B'$ is:
$$M_{B'}=\left(\begin{array}{cccccc}
a_{1}^2+a_{2}^2c_{1}+a_{3}^2c_{2} & 0 & 0\\
\alpha^2+\beta^2c_{1}+\gamma^2c_{2} & 0 & 0\\
\delta^2+\nu^2c_{1}+\eta^2c_{2} & 0 & 0
\end{array}\right).$$

Now, we start with the analysis of possible cases.\\[2mm]
\textbf{Case 1.} Suppose that $a_{1}\neq 0$.\\
By changing the basis, we may assume that $e_{1}^2=e_{1}+a_{2}e_{2}+a_{3}e_{3}$.
Using (\ref{alphauchun}) we get $\alpha=-(\beta a_{2}c_{1}+\gamma a_{3}c_{2})$
and by (\ref{deltauchun}), $\delta=-(\nu a_{2}c_{1}+\eta a_{3}c_{2})$.
If we replace $\alpha$ and $\delta$ in (\ref{detB}) we obtain that:
$$\left|P_{B'B}\right|=(1+a_{2}^2c_{1}+a_{3}^2c_{2})(\beta\eta-\gamma\nu).$$

Now we check that $|P_{B'B}|$ is zero or not.
This happens depending on $1+a_{2}^2c_{1}+a_{3}^2c_{2}$ being zero or not.\\[2mm]
\textbf{Case 1.1} Assume $1+a_{2}^2c_{1}+a_{3}^2c_{2}=0$.\\
In this case $E^2$ has not the extension property since $|P_{B'B}|=0$.
We will analyze what happens when $1+a_{3}^2c_{2}\neq 0$ and when $1+a_{3}^2c_{2}=0$.\\[2mm]
\textbf{Case 1.1.1} If $1+a_{3}^2c_{2}\neq 0$.\\
Note that $a_{2}^2c_{1}\neq 0$ since otherwise we get a contradiction.
Then $c_{1}=\frac{-1-a_{3}^2c_{2}}{a_{2}^2}$. In this case, the structure matrix is:
$$M_{B}=\left(\begin{array}{cccccc}
1 & a_{2} & a_{3}\\[2mm]
\frac{-1-a_{3}^2c_{2}}{a_{2}^2} & \frac{-1-a_{3}^2c_{2}}{a_{2}}
& \frac{(-1-a_{3}^2c_{2})a_{3}}{a_{2}^2}\\[2mm]
c_{2} & c_{2}a_{2} & c_{2}a_{3}\end{array}\right).$$
\textbf{Case 1.1.1.1} Suppose that $a_{3}\neq 0$.

If we take the natural basis $B''=\{e_{1},a_{2}e_{2},a_{3}e_{3}\}$, then
\begin{equation}\label{yangibasis}
M_{B''}=\left(\begin{array}{cccccc}
1 & 1 & 1\\[2mm]
-1-a_{3}^2c_{2} & -1-a_{3}^2c_{2} & -1-a_{3}^2c_{2}\\[2mm]
a_{3}^2c_{2} & a_{3}^2c_{2} & a_{3}^2c_{2}\end{array}\right).
\end{equation}

We are going to verify two cases: $c_{2}=0$ and $c_{2}\neq 0$.

Assume first $c_{2}=0$. Then
$M_{B''}=\left(\begin{array}{cccccc}
1 & 1 & 1\\
-1 & -1 & -1\\
0 & 0 & 0 \end{array}\right)$.
By considering another change of basis we find a structure matrix
with more zeros. Namely, let $B'''=\{e_{2},e_{1}+e_{3},e_{3}\}$.
Then
$$M_{B'''}=\left(\begin{array}{cccccc}
1 & 1 & 0\\
-1 & -1 & 0\\
0 & 0 & 0 \end{array}\right).$$

In what follows we will assume that $c_{2}\neq 0$. We recall that
we are considering the structure matrix given in (\ref{yangibasis}).
Take $I:=<(1+a_{3}^2c_{2})e_{1}+e_{2}>$. Then $I$ is a two-dimensional
evolution ideal which is degenerate as an evolution algebra.

Now, for $B'''$ the natural basis change is
$$
P_{B'''B''}=\left(\begin{array}{cccccc}
\frac{1+D}{2(1+a_{3}^2c_{2})}
& \frac{-1+D}{2(1+a_{3}^2c_{2})}
& \frac{1+D}{2(1+a_{3}^2c_{2})}\\[2mm]
\frac{-1+D}{2(1+a_{3}^2c_{2})}
& \frac{1+D}{2(1+a_{3}^2c_{2})}
& \frac{-1+D}{2(1+a_{3}^2c_{2})}\\[2mm]
-(a_{3}^2c_{2}) & 0 & 1 \end{array}\right)$$
where $D=(a_{3}^2c_{2})^3+2(a_{3}^2c_{2})^2+(a_{3}^2c_{2})$
and we obtain: $$M_{B'''}=\left(\begin{array}{cccccc}
1 & 1 & 0\\
-1 & -1 & 0\\
1 & 1 & 0 \end{array}\right).$$
Note that $|P_{B'''B''}|=-2(a_{3}^2c_{2})(1+a_{3}^2c_{2})^2\neq 0$
because $a_{3}^2c_{2}\neq 0$ and $a_{3}^2c_{2}\neq-1.$\\[2mm]
\textbf{Case 1.1.1.2} Suppose that $a_{3}=0$.
Then $1+a_{2}^2c_{1}=0$ and necessarily $a_{2}^2c_{1}\neq 0$. In this case,
\begin{equation}
M_{B}=\left(\begin{array}{cccccc}
1 & a_{2} & 0\\[2mm]
\frac{-1}{a_{2}^2} & \frac{-1}{a_{2}} & 0\\[2mm]
c_{2} & c_{2}a_{2} & 0\end{array}\right).
\end{equation}

Again we will verify two cases depending on $c_{2}$.

Assume $c_{2}>0$. Take $B''=\{e_{1},a_{2}e_{2},\frac{1}{\sqrt{c_{2}}}e_{3}\}$.
Then
 $M_{B''}=\left(\begin{array}{cccccc}
1 & 1 & 0\\
-1 & -1 & 0\\
1 & 1 & 0 \end{array}\right)$, which has already appeared.

Assume $c_{2}<0$. Take $B''=\{e_{1},a_{2}e_{2},\frac{1}{\sqrt{-c_{2}}}e_{3}\}$.
Then
$M_{B''}=\left(\begin{array}{cccccc}
1 & 1 & 0\\
-1 & -1 & 0\\
-1 & -1 & 0 \end{array}\right)$.

Suppose $c_{2}=0$. Then, for $B''=\{e_{1},a_{2}e_{2},e_{3}\}$ we
have
$M_{B''}=\left(\begin{array}{cccccc}
1 & 1 & 0\\
-1 & -1 & 0\\
0 & 0 & 0 \end{array}\right)$, matrix that has already appeared.\\[2mm]
\textbf{Case 1.1.2} Suppose that $1+a_{3}^2c_{2}=0$.\\
This implies that $a_{3}^2c_{2}\neq 0$ and $a_{2}^2c_{1}=0$.\\[2mm]
\textbf{Case 1.1.2.1} Assume $c_{1}\neq 0$.\\
This implies that $a_{2}=0$. Moreover, since $a_{3}\neq 0$, necessarily
$c_{2}=\frac{-1}{a_{3}^2}$. If we take the natural basis $B''=\{e_{1},e_{3},e_{2}\}$,
then
$M_{B''}=\left(\begin{array}{cccccc}
1 & a_{3} & 0\\[2mm]
\frac{-1}{a_{3}^2} & \frac{-1}{a_{3}} & 0\\[2mm]
0 & 0 & 0 \end{array}\right)$ and we are in the Case 1.1.1.2.\\[2mm]
\textbf{Case 1.1.2.2} Suppose $c_{1}=0$ and $a_{2}=0$.\\
Take $B''=\{e_{1},a_{3}e_{3},e_{2}\}$. Then
$M_{B''}=\left(\begin{array}{cccccc}
1 & 1 & 0\\
-1 & -1 & 0\\
0 & 0 & 0\end{array}\right)$ as above.\\[2mm]
\textbf{Case 1.1.2.3} Suppose $c_{1}=0$ and $a_{2}\neq 0$.\\
Taking $B''=\{e_{1},e_{3},e_{2}\}$, we are in the same conditions as
in the Case 1.1.1.1 with $c_{2}=0$.\\[2mm]
\textbf{Case 1.2} Assume $1+a_{2}^2c_{1}+a_{3}^2c_{2}\neq 0.$\\
We will prove that $E^2$ has the extension property. In any subcase
we will provide with a natural basis for $E$ one of its elements
gives a natural basis of $E^2$.\\[2mm]
\textbf{Case 1.2.1} Suppose that $c_{1}=c_{2}=0$.\\
Consider the natural basis $B'=\{e_{1}^2,e_{2}+e_{3},2e_{2}+e_{3}\}$.
Then
$$M_{B'}=\left(\begin{array}{cccccc}
1 & 0 & 0\\
0 & 0 & 0\\
0 & 0 & 0\end{array}\right).$$
We claim that this evolution algebra does not have a two-dimensional
evolution ideal generated by one element. To prove this consider
$f=me_{1}+ne_{2}+pe_{3}$. Then the ideal $I$ generated by $f$ is
the linear span of $\{f\}\cup\{m^ie_{i}\}_{i\in\mathbb{N}}.$
In order for $I$ to have a natural basis with two elements,
necessarily $m=0$, implying that the dimension of $I$ is one, a contradiction.\\[2mm]
\textbf{Case 1.2.2} Assume that $c_{1}=0$ and ${c_{2}\neq 0}$.\\
Then $1+c_{2}a_{3}^2\neq 0.$ For
$B'=\{e_{1}+a_{2}e_{2}+a_{3}e_{3}, e_{2}, -a_{3}c_{2}e_{1}+e_{2}+e_{3}\}$
the structure matrix is
$$M_{B'}=\left(\begin{array}{cccccc}
1+c_{2}a_{3}^2 & 0 & 0\\
0 & 0 & 0\\
c_{2}(1+c_{2}a_{3}^2) & 0 & 0\end{array}\right).$$

Note that $E^2$ has the extension property because the first
element in $B'$ is $e_{1}^2$, which is a natural basis of $E^2$.\\[2mm]
\textbf{Case 1.2.2.1} Assume that $c_{2}>0$.\\
Consider $B''=\left\{\frac{1}{1+c_{2}a_{3}^2}e_{1},e_{2},
\frac{1}{\sqrt{c_{2}}(1+c_{2}a_{3}^2)}e_{3}\right\}.$ Then
$$M_{B''}=\left(\begin{array}{cccccc}
1 & 0 & 0\\
0 & 0 & 0\\
1 & 0 & 0\end{array}\right).$$
\textbf{Case 1.2.2.2} Assume that $c_{2}<0$.\\
Consider $B''=\left\{\frac{1}{1+c_{2}a_{3}^2}e_{1},e_{2},
\frac{1}{\sqrt{-c_{2}}(1+c_{2}a_{3}^2)}e_{3}\right\}.$ Then
$$M_{B''}=\left(\begin{array}{cccccc}
1 & 0 & 0\\
0 & 0 & 0\\
-1 & 0 & 0\end{array}\right).$$

We claim that these evolution algebras do not have a
two-dimensional evolution ideal generated by one element.
Let $f=\alpha e_{1}+\beta e_{2}+\gamma e_{3}$. Then the ideal
generated by $f$, say $I$,  is the linear span of $\{f,\gamma e_{1},\alpha e_{1}\}\cup\{(\alpha^2+\gamma^2)\alpha^ie_{1}\}_{i\in\mathbb{N}\cup\{0\}}
\cup\{(\alpha^2+\gamma^2)^2\alpha^ie_{1}\}_{i\in\mathbb{N}\cup\{0\}}.$
After some computations, in order for $I$ to have dimension 2 and to
be degenerated implies $\alpha=0$ or $\gamma=0$, a contradiction.\\[2mm]
\textbf{Case 1.2.3} If $c_{1}>0$ and $c_{2}>0.$\\
If $B'$ is the natural basis such that $P_{B'B}=\left(\begin{array}{cccccc}
1 & a_{2} & a_{3}\\[2mm]
-a_{2}c_{1} & 1 & 0\\[2mm]
\frac{-a_{3}c_{2}}{1+c_{1}a_{2}^2} & \frac{-a_{3}a_{2}c_{2}}{1+c_{1}a_{2}^2} & 1\end{array}\right)$,
we obtain that $M_{B'}=\left(\begin{array}{cccccc}
1+a_{2}^2c_{1}+a_{3}^2c_{2} & 0 & 0\\[2mm]
c_{1}(1+c_{1}a_{2}^2) & 0 & 0\\[2mm]
\frac{c_{2}(1+a_{2}^2c_{1}+a_{3}^2c_{2})}{(1+c_{1}a_{2}^2)} & 0 & 0\end{array}\right).$\\
Now, consider the natural basis $B''=\{f_{1},f_{2},f_{3}\}$
such that $$P_{B''B'}=\left(\begin{array}{cccccc}
\frac{1}{1+a_{2}^2c_{1}+a_{3}^2c_{2}} & 0 & 0\\[2mm]
0 & \frac{1}{\sqrt{c_{1}(1+c_{1}a_{2}^2)(1+a_{2}^2c_{1}+a_{3}^2c_{2})}} & 0\\[2mm]
0 & 0 & \frac{\sqrt{1+c_{1}a_{2}^2}}{\sqrt{c_{2}}(1+a_{2}^2c_{1}+a_{3}^2c_{2})}\end{array}\right)$$
and the structure matrix is
$$M_{B''}=\left(\begin{array}{cccccc}
1 & 0 & 0\\
1 & 0 & 0\\
1 & 0 & 0\end{array}\right).$$
It is not difficult to show that this evolution algebra does
not have a degenerate two-dimensional evolution ideal generated by one element.\\[2mm]
\textbf{Case 1.2.4} If $c_{1}>0$ and $c_{2}<0.$\\
For the natural basis $B'$ such that $P_{B'B}=\left(\begin{array}{cccccc}
1 & a_{2} & a_{3}\\[2mm]
-a_{2}c_{1} & 1 & 0\\[2mm]
\frac{-a_{3}c_{2}}{1+c_{1}a_{2}^2} & \frac{-a_{3}a_{2}c_{2}}{1+c_{1}a_{2}^2} & 1\end{array}\right)$,
we obtain that $M_{B'}=\left(\begin{array}{cccccc}
1+a_{2}^2c_{1}+a_{3}^2c_{2} & 0 & 0\\[2mm]
c_{1}(1+c_{1}a_{2}^2) & 0 & 0\\[2mm]
\frac{c_{2}(1+a_{2}^2c_{1}+a_{3}^2c_{2})}{(1+c_{1}a_{2}^2)} & 0 & 0\end{array}\right).$\\[2mm]
\textbf{Case 1.2.4.1} Assume $1+a_{2}^2c_{1}+a_{3}^2c_{2}>0.$\\
Now, consider the natural basis $B''=\{f_{1},f_{2},f_{3}\}$
such that $$P_{B''B'}=\left(\begin{array}{cccccc}
\frac{1}{1+a_{2}^2c_{1}+a_{3}^2c_{2}} & 0 & 0\\[2mm]
0 & \frac{1}{\sqrt{c_{1}(1+c_{1}a_{2}^2)(1+a_{2}^2c_{1}+a_{3}^2c_{2})}} & 0\\[2mm]
0 & 0 & \frac{\sqrt{1+c_{1}a_{2}^2}}{\sqrt{-c_{2}}(1+a_{2}^2c_{1}+a_{3}^2c_{2})}\end{array}\right)$$
and the structure matrix is
$$M_{B''}=\left(\begin{array}{cccccc}
1 & 0 & 0\\
1 & 0 & 0\\
-1 & 0 & 0\end{array}\right).$$\\[2mm]
\textbf{Case 1.2.4.2} Assume $1+a_{2}^2c_{1}+a_{3}^2c_{2}<0.$\\
Now, consider the natural basis $B''=\{f_{1},f_{2},f_{3}\}$
such that $$P_{B''B'}=\left(\begin{array}{cccccc}
\frac{1}{1+a_{2}^2c_{1}+a_{3}^2c_{2}} & 0 & 0\\[2mm]
0 & \frac{1}{\sqrt{-c_{1}(1+c_{1}a_{2}^2)(1+a_{2}^2c_{1}+a_{3}^2c_{2})}} & 0\\[2mm]
0 & 0 & \frac{\sqrt{1+c_{1}a_{2}^2}}{\sqrt{-c_{2}}(1+a_{2}^2c_{1}+a_{3}^2c_{2})}\end{array}\right)$$
and the structure matrix is:
$$M_{B''}=\left(\begin{array}{cccccc}
1 & 0 & 0\\
-1 & 0 & 0\\
-1 & 0 & 0\end{array}\right).$$\\[2mm]
\textbf{Case 1.2.5} If $c_{1}<0$ and $c_{2}>0.$\\[2mm]
\textbf{Case 1.2.5.1} Assume $1+a_{2}^2c_{1}>0.$\\
For the natural basis $B'$ such that $P_{B'B}=\left(\begin{array}{cccccc}
1 & a_{2} & a_{3}\\[2mm]
-a_{2}c_{1} & 1 & 0\\[2mm]
\frac{-a_{3}c_{2}}{1+c_{1}a_{2}^2} & \frac{-a_{3}a_{2}c_{2}}{1+c_{1}a_{2}^2} & 1\end{array}\right)$,
we obtain that $M_{B'}=\left(\begin{array}{cccccc}
1+a_{2}^2c_{1}+a_{3}^2c_{2} & 0 & 0\\[2mm]
c_{1}(1+c_{1}a_{2}^2) & 0 & 0\\[2mm]
\frac{c_{2}(1+a_{2}^2c_{1}+a_{3}^2c_{2})}{(1+c_{1}a_{2}^2)} & 0 & 0\end{array}\right).$\\[2mm]
\textbf{Case 1.2.5.1.1} Assume $1+a_{2}^2c_{1}+a_{3}^2c_{2}>0.$\\
Now, consider the natural basis $B''=\{f_{1},f_{2},f_{3}\}$
such that $$P_{B''B'}=\left(\begin{array}{cccccc}
\frac{1}{1+a_{2}^2c_{1}+a_{3}^2c_{2}} & 0 & 0\\[2mm]
0 & \frac{1}{\sqrt{-c_{1}(1+c_{1}a_{2}^2)(1+a_{2}^2c_{1}+a_{3}^2c_{2})}} & 0\\[2mm]
0 & 0 & \frac{\sqrt{1+c_{1}a_{2}^2}}{\sqrt{c_{2}}(1+a_{2}^2c_{1}+a_{3}^2c_{2})}\end{array}\right)$$
and the structure matrix is:
$$M_{B''}=\left(\begin{array}{cccccc}
1 & 0 & 0\\
-1 & 0 & 0\\
1 & 0 & 0\end{array}\right).$$
It is not difficult to show that this evolution algebra
does not have a degenerate two-dimensional evolution ideal generated by one element.\\[2mm]
\textbf{Case 1.2.5.1.2} Assume $1+a_{2}^2c_{1}+a_{3}^2c_{2}<0.$\\
Now, consider the natural basis $B''=\{f_{1},f_{2},f_{3}\}$
such that $$P_{B''B'}=\left(\begin{array}{cccccc}
\frac{1}{1+a_{2}^2c_{1}+a_{3}^2c_{2}} & 0 & 0\\[2mm]
0 & \frac{1}{\sqrt{c_{1}(1+c_{1}a_{2}^2)(1+a_{2}^2c_{1}+a_{3}^2c_{2})}} & 0\\[2mm]
0 & 0 & \frac{\sqrt{1+c_{1}a_{2}^2}}{\sqrt{c_{2}}(1+a_{2}^2c_{1}+a_{3}^2c_{2})}\end{array}\right)$$
and the structure matrix is:
$M_{B''}=\left(\begin{array}{cccccc}
1 & 0 & 0\\
1 & 0 & 0\\
1 & 0 & 0\end{array}\right),$ which has already appeared.\\[2mm]
\textbf{Case 1.2.5.2} Assume $1+a_{2}^2c_{1}<0.$\\
For $B'$ the natural basis such that $P_{B'B}=\left(\begin{array}{cccccc}
1 & a_{2} & a_{3}\\[2mm]
-a_{2}c_{1} & 1 & 0\\[2mm]
\frac{-a_{3}c_{2}}{1+c_{1}a_{2}^2} & \frac{-a_{3}a_{2}c_{2}}{1+c_{1}a_{2}^2} & 1\end{array}\right)$,
we obtain that $M_{B'}=\left(\begin{array}{cccccc}
1+a_{2}^2c_{1}+a_{3}^2c_{2} & 0 & 0\\[2mm]
c_{1}(1+c_{1}a_{2}^2) & 0 & 0\\[2mm]
\frac{c_{2}(1+a_{2}^2c_{1}+a_{3}^2c_{2})}{(1+c_{1}a_{2}^2)} & 0 & 0\end{array}\right).$\\[2mm]
\textbf{Case 1.2.5.2.1} Assume $1+a_{2}^2c_{1}+a_{3}^2c_{2}>0.$\\
Now, consider the natural basis $B''=\{f_{1},f_{2},f_{3}\}$
such that $$P_{B''B'}=\left(\begin{array}{cccccc}
\frac{1}{1+a_{2}^2c_{1}+a_{3}^2c_{2}} & 0 & 0\\[2mm]
0 & \frac{1}{\sqrt{c_{1}(1+c_{1}a_{2}^2)(1+a_{2}^2c_{1}+a_{3}^2c_{2})}} & 0\\[2mm]
0 & 0 & \frac{\sqrt{-(1+c_{1}a_{2}^2)}}{\sqrt{c_{2}}(1+a_{2}^2c_{1}+a_{3}^2c_{2})}\end{array}\right)$$
and the structure matrix is:
$M_{B''}=\left(\begin{array}{cccccc}
1 & 0 & 0\\
1 & 0 & 0\\
-1 & 0 & 0\end{array}\right),$ which has already appeared.\\[2mm]
\textbf{Case 1.2.5.2.2} Assume $1+a_{2}^2c_{1}+a_{3}^2c_{2}<0.$\\
Now, consider the natural basis $B''=\{f_{1},f_{2},f_{3}\}$
such that $$P_{B''B'}=\left(\begin{array}{cccccc}
\frac{1}{1+a_{2}^2c_{1}+a_{3}^2c_{2}} & 0 & 0\\[2mm]
0 & \frac{1}{\sqrt{-c_{1}(1+c_{1}a_{2}^2)(1+a_{2}^2c_{1}+a_{3}^2c_{2})}} & 0\\[2mm]
0 & 0 & \frac{\sqrt{-(1+c_{1}a_{2}^2)}}{\sqrt{c_{2}}(1+a_{2}^2c_{1}+a_{3}^2c_{2})}\end{array}\right)$$
and the structure matrix is:
$M_{B''}=\left(\begin{array}{cccccc}
1 & 0 & 0\\
-1 & 0 & 0\\
-1 & 0 & 0\end{array}\right),$ which has already appeared.\\[2mm]
\textbf{Case 1.2.6} If $c_{1}<0$ and $c_{2}<0.$\\[2mm]
\textbf{Case 1.2.6.1} Assume $1+a_{2}^2c_{1}>0.$\\
For the natural basis $B'$ such that $P_{B'B}=\left(\begin{array}{cccccc}
1 & a_{2} & a_{3}\\[2mm]
-a_{2}c_{1} & 1 & 0\\[2mm]
\frac{-a_{3}c_{2}}{1+c_{1}a_{2}^2} & \frac{-a_{3}a_{2}c_{2}}{1+c_{1}a_{2}^2} & 1\end{array}\right)$,
we obtain that $M_{B'}=\left(\begin{array}{cccccc}
1+a_{2}^2c_{1}+a_{3}^2c_{2} & 0 & 0\\[2mm]
c_{1}(1+c_{1}a_{2}^2) & 0 & 0\\[2mm]
\frac{c_{2}(1+a_{2}^2c_{1}+a_{3}^2c_{2})}{(1+c_{1}a_{2}^2)} & 0 & 0\end{array}\right).$\\[2mm]
\textbf{Case 1.2.6.1.1} Assume $1+a_{2}^2c_{1}+a_{3}^2c_{2}>0.$\\
Now, consider the natural basis $B''=\{f_{1},f_{2},f_{3}\}$
such that $$P_{B''B'}=\left(\begin{array}{cccccc}
\frac{1}{1+a_{2}^2c_{1}+a_{3}^2c_{2}} & 0 & 0\\[2mm]
0 & \frac{1}{\sqrt{-c_{1}(1+c_{1}a_{2}^2)(1+a_{2}^2c_{1}+a_{3}^2c_{2})}} & 0\\[2mm]
0 & 0 & \frac{\sqrt{1+c_{1}a_{2}^2}}{\sqrt{-c_{2}}(1+a_{2}^2c_{1}+a_{3}^2c_{2})}\end{array}\right)$$
and the structure matrix is:
$M_{B''}=\left(\begin{array}{cccccc}
1 & 0 & 0\\
-1 & 0 & 0\\
-1 & 0 & 0\end{array}\right),$ which has appeared above.\\[2mm]
\textbf{Case 1.2.6.1.2} Assume $1+a_{2}^2c_{1}+a_{3}^2c_{2}<0.$\\
Now, consider the natural basis $B''=\{f_{1},f_{2},f_{3}\}$
such that $$P_{B''B'}=\left(\begin{array}{cccccc}
\frac{1}{1+a_{2}^2c_{1}+a_{3}^2c_{2}} & 0 & 0\\[2mm]
0 & \frac{1}{\sqrt{c_{1}(1+c_{1}a_{2}^2)(1+a_{2}^2c_{1}+a_{3}^2c_{2})}} & 0\\[2mm]
0 & 0 & \frac{\sqrt{1+c_{1}a_{2}^2}}{\sqrt{-c_{2}}(1+a_{2}^2c_{1}+a_{3}^2c_{2})}\end{array}\right)$$
and the structure matrix is:
$M_{B''}=\left(\begin{array}{cccccc}
1 & 0 & 0\\
1 & 0 & 0\\
-1 & 0 & 0\end{array}\right),$ which has already appeared.\\[2mm]
\textbf{Case 1.2.6.2} Assume $1+a_{2}^2c_{1}<0.$\\
For the natural basis $B'$ such that $P_{B'B}=\left(\begin{array}{cccccc}
1 & a_{2} & a_{3}\\[2mm]
-a_{2}c_{1} & 1 & 0\\[2mm]
\frac{-a_{3}c_{2}}{1+c_{1}a_{2}^2} & \frac{-a_{3}a_{2}c_{2}}{1+c_{1}a_{2}^2} & 1\end{array}\right)$,
we obtain that $M_{B'}=\left(\begin{array}{cccccc}
1+a_{2}^2c_{1}+a_{3}^2c_{2} & 0 & 0\\[2mm]
c_{1}(1+c_{1}a_{2}^2) & 0 & 0\\[2mm]
\frac{c_{2}(1+a_{2}^2c_{1}+a_{3}^2c_{2})}{(1+c_{1}a_{2}^2)} & 0 & 0\end{array}\right).$\\[2mm]
\textbf{Case 1.2.6.2.1} Assume $1+a_{2}^2c_{1}+a_{3}^2c_{2}>0.$\\
Now, consider the natural basis $B''=\{f_{1},f_{2},f_{3}\}$
such that $$P_{B''B'}=\left(\begin{array}{cccccc}
\frac{1}{1+a_{2}^2c_{1}+a_{3}^2c_{2}} & 0 & 0\\[2mm]
0 & \frac{1}{\sqrt{c_{1}(1+c_{1}a_{2}^2)(1+a_{2}^2c_{1}+a_{3}^2c_{2})}} & 0\\[2mm]
0 & 0 & \frac{\sqrt{-(1+c_{1}a_{2}^2)}}{\sqrt{-c_{2}}(1+a_{2}^2c_{1}+a_{3}^2c_{2})}\end{array}\right)$$
and the structure matrix is:
$M_{B''}=\left(\begin{array}{cccccc}
1 & 0 & 0\\
1 & 0 & 0\\
1 & 0 & 0\end{array}\right),$ which has already appeared.\\[2mm]
\textbf{Case 1.2.6.2.2} Assume $1+a_{2}^2c_{1}+a_{3}^2c_{2}<0.$\\
Now, consider the natural basis $B''=\{f_{1},f_{2},f_{3}\}$
such that $$P_{B''B'}=\left(\begin{array}{cccccc}
\frac{1}{1+a_{2}^2c_{1}+a_{3}^2c_{2}} & 0 & 0\\[2mm]
0 & \frac{1}{\sqrt{-c_{1}(1+c_{1}a_{2}^2)(1+a_{2}^2c_{1}+a_{3}^2c_{2})}} & 0\\[2mm]
0 & 0 & \frac{\sqrt{-(1+c_{1}a_{2}^2)}}{\sqrt{-c_{2}}(1+a_{2}^2c_{1}+a_{3}^2c_{2})}\end{array}\right)$$
and the structure matrix is:
$$M_{B''}=\left(\begin{array}{cccccc}
1 & 0 & 0\\
-1 & 0 & 0\\
1 & 0 & 0\end{array}\right).$$\\[2mm]
\textbf{Case 1.2.7} Suppose that $c_{1}\neq0$, $c_{2}\neq0$ and $1+a_{2}^2c_{1}=0$.\\
Then $a_{2}a_{3}c_{1}c_{2}\neq0$ and so $c_{1}=-\frac{1}{a_{2}^2}$. For $B'$ we have
$$M_{B'}=\left(\begin{array}{cccccc}
a_{3}^2c_{2} & 0 & 0\\[2mm]
\frac{c_{2}}{a_{3}^2} & 0 & 0\\[2mm]
-a_{3}^2c_{2} & 0 & 0\end{array}\right).$$
Considering natural basis
$B''=\{\frac{1}{a_{3}^2c_{2}}e_{1},\frac{1}{c_{2}}e_{2},\frac{1}{a_{3}^2c_{2}}e_{3}\}$
we obtain
$M_{B''}=\left(\begin{array}{cccccc}
1 & 0 & 0\\
1 & 0 & 0\\
-1 & 0 & 0\end{array}\right)$. Which has already appeared.\\[2mm]
\textbf{Case 1.2.8} Suppose that $c_{1}\neq0$, and $c_{2}=0$.\\
Considering the natural basis $B''=\{e_{1},e_{3},e_{2}\}$ we obtain
$M_{B''}=\left(\begin{array}{cccccc}
1 & a_{3} & a_{2}\\
0 & 0 & 0\\
c_{1} & a_{3}c_{1} & a_{2}c_{1}\end{array}\right),$ and we
are in the same conditions as in Case 1.1.1.2.\\[2mm]
\textbf{Case 2} Suppose that $a_{1}=0.$\\
The structure matrix of the evolution algebra is
$$M_{B}=\left(\begin{array}{cccccc}
0 & a_{2} & a_{3}\\
0 & a_{2}c_{1} & a_{3}c_{1}\\
0 & a_{2}c_{2} & a_{3}c_{2}\end{array}\right).$$
Necessarily there exists $i\in\{2,3\}$ such that $a_{i}\neq0.$
Without loss in generality we assume $a_{2}\neq0.$\\[2mm]
\textbf{Case 2.1} Assume $c_{1}\neq0.$
Consider the natural basis $B''=\{e_{2},e_{3},e_{1}\}.$ Then \,\,\,\,\,\,\,\,\ $M_{B''}=\left(\begin{array}{cccccc}
a_{2}c_{1} & a_{3}c_{1} & 0\\
a_{2}c_{2} & a_{3}c_{2} & 0\\
1 & a_{3} & 0\end{array}\right)$
and we are in the same conditions as in Case 1.\\[2mm]
\textbf{Case 2.2} If $c_{1}=0.$\\[2mm]
\textbf{Case 2.2.1} Assume $c_{2}a_{3}\neq0.$\\
Taking the natural basis $B''=\{e_{3},e_{2},e_{1}\},$ we obtain  $M_{B''}=\left(\begin{array}{cccccc}
a_{3}c_{2} & a_{2}c_{2} & 0\\
0 & 0 & 0\\
a_{3} & a_{2} & 0\end{array}\right)$ and we are in the same conditions as in the Case1.\\[2mm]
\textbf{Case 2.2.2} Suppose that $c_{2}a_{3}=0.$\\[2mm]
\textbf{Case 2.2.2.1} Assume $c_{2}=0.$\\
Take the natural basis $B'=\{a_{2}e_{2}+a_{3}e_{3},\frac{1}{a_{2}}e_{3},e_{1}\}.$
Then $$M_{B'}=\left(\begin{array}{cccccc}
0 & 0 & 0\\
0 & 0 & 0\\
1 & 0 & 0\end{array}\right).$$
Note that $E^2$ has the extension property.\\[2mm]
\textbf{Case 2.2.2.2} Assume $c_{2}>0.$\\
Then $a_{3}=0.$ For $B'=\{a_{2}e_{2},e_{1},\frac{1}{\sqrt{c_{2}}}e_{3}\}$
we have $$M_{B'}=\left(\begin{array}{cccccc}
0 & 0 & 0\\
1 & 0 & 0\\
1 & 0 & 0\end{array}\right).$$
\textbf{Case 2.2.2.3} Assume $c_{2}<0.$\\
Then $a_{3}=0.$ For $B'=\{a_{2}e_{2},e_{1},\frac{1}{\sqrt{-c_{2}}}e_{3}\}$
we have $$M_{B'}=\left(\begin{array}{cccccc}
0 & 0 & 0\\
1 & 0 & 0\\
-1 & 0 & 0\end{array}\right).$$

Thus we have proved the following theorem.
\begin{theorem}\label{theorem3dim}
Any three dimensional real evolution algebra $E$ with $dim(E^2)=1$ is
isomorphic to one of the following pairwise non-isomorphic algebras:

$E_{1}: \left(\begin{array}{cccccc}
1 & 1 & 0\\
-1 & -1 & 0\\
0 & 0 & 0\end{array}\right)$,
$E_{2}: \left(\begin{array}{cccccc}
1 & 1 & 0\\
-1 & -1 & 0\\
1 & 1 & 0\end{array}\right)$,
$E_{3}: \left(\begin{array}{cccccc}
1 & 1 & 0\\
-1 & -1 & 0\\
-1 & -1 & 0\end{array}\right)$,
$E_{4}: \left(\begin{array}{cccccc}
1 & 0 & 0\\
0 & 0 & 0\\
0 & 0 & 0\end{array}\right)$,\\
$E_{5}: \left(\begin{array}{cccccc}
1 & 0 & 0\\
0 & 0 & 0\\
1 & 0 & 0\end{array}\right)$,
$E_{6}: \left(\begin{array}{cccccc}
1 & 0 & 0\\
0 & 0 & 0\\
-1 & 0 & 0\end{array}\right)$,
$E_{7}: \left(\begin{array}{cccccc}
1 & 0 & 0\\
1 & 0 & 0\\
1 & 0 & 0\end{array}\right)$,\\
$E_{8}: \left(\begin{array}{cccccc}
1 & 0 & 0\\
1 & 0 & 0\\
-1 & 0 & 0\end{array}\right)$,
$E_{9}: \left(\begin{array}{cccccc}
1 & 0 & 0\\
-1 & 0 & 0\\
-1 & 0 & 0\end{array}\right)$,
$E_{10}: \left(\begin{array}{cccccc}
1 & 0 & 0\\
-1 & 0 & 0\\
1 & 0 & 0\end{array}\right)$,\\
$E_{11}: \left(\begin{array}{cccccc}
0 & 0 & 0\\
0 & 0 & 0\\
1 & 0 & 0\end{array}\right)$,
$E_{12}: \left(\begin{array}{cccccc}
0 & 0 & 0\\
1 & 0 & 0\\
1 & 0 & 0\end{array}\right)$,
$E_{13}: \left(\begin{array}{cccccc}
0 & 0 & 0\\
1 & 0 & 0\\
-1 & 0 & 0\end{array}\right).$
\end{theorem}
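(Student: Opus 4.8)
The plan is to prove the two halves of the statement separately: first that every such $E$ is isomorphic to one of the thirteen displayed algebras (exhaustiveness), and then that no two of them are isomorphic. The first half is exactly the case analysis carried out above, so I would organize it as indicated and regard it as essentially complete; the real work, and the place where I expect the difficulty to lie, is the second half.

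For exhaustiveness, the starting observation is that $\dim(E^2)=1$ forces the structure matrix to have rank one: choosing a natural basis with $e_1^2\neq 0$ and writing $e_2^2=c_1e_1^2$, $e_3^2=c_2e_1^2$, every row of $M_B$ is a multiple of $(a_1,a_2,a_3)$. I would then reduce by admissible changes of natural basis, splitting first on whether $a_1\neq 0$ or $a_1=0$ and then on the \emph{extension property} of $E^2$, whose obstruction is the vanishing of $1+a_2^2c_1+a_3^2c_2$ (the factor appearing in the determinant of the transition matrix). The reason so many real forms survive is that a diagonal rescaling $e_i\mapsto\lambda_ie_i$ multiplies the square $e_i^2$ by $\lambda_i^2>0$, so it can normalize magnitudes to $1$ but can never change a sign; hence the signs of $c_1$, $c_2$, of $1+a_2^2c_1$, and of $1+a_2^2c_1+a_3^2c_2$ are the data that must be tracked, and running through their combinations (normalizing each surviving entry to $\pm1$) produces precisely the two visible families: the three forms where extension fails and a block of type $\left(\begin{smallmatrix}1&1\\-1&-1\end{smallmatrix}\right)$ persists, and the remaining forms where extension holds and only the first column survives.

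For non-isomorphism I would pass to a basis-free description. Since $E$ is commutative with $\dim(E^2)=1$, fixing a generator $u_0$ of $E^2$ lets me write $x^2=q(x)\,u_0$ for a quadratic form $q$ on $E$, with associated symmetric bilinear form $b$; the whole multiplication is recovered by polarization, so an algebra isomorphism is the same as a linear bijection $\phi$ together with a scalar $\mu\neq 0$ satisfying $\phi^{*}b'=\mu\,b$ and $\phi(u_0)=\mu\,u_0'$. This reduces the classification to a problem about the pair $(b,u_0)$ and yields a short list of genuine invariants: the nullity of $b$ (equivalently $\dim\mathrm{Ann}(E)$), the signature of $b$, the value class of $q(u_0)$ — in particular whether $q(u_0)=0$, which is exactly the condition $(E^2)^2=0$ — and whether $u_0$ lies in the radical of $b$. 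The extension property used in the first half is visible here as a condition on how $u_0$ sits relative to $\mathrm{rad}(b)$.

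The main obstacle is to check that these invariants genuinely separate all thirteen representatives, and here one must be careful, because the displayed sign patterns are \emph{not} themselves invariant. Two phenomena conspire: permuting $e_2$ and $e_3$ reorders the diagonal entries, and when $q(u_0)=0$ the scalar $\mu$ may be taken negative, so by Sylvester's law a form of signature $(p,q)$ becomes interchangeable with one of signature $(q,p)$. Consequently, the correct statement to verify is that the quadruple consisting of the nullity, the signature up to this swap when $q(u_0)=0$, the class of $q(u_0)$, and membership of $u_0$ in $\mathrm{rad}(b)$ takes a distinct value on each listed algebra. I would compute this quadruple for all thirteen matrices; the delicate cases are those sharing a signature — for instance the forms with an indefinite nondegenerate $b$ — where one must confirm by a direct construction (or a Witt-type argument) whether or not an isomorphism exists, rather than inferring non-isomorphism from the differing appearance of the matrices.
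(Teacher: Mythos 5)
Your exhaustiveness half is the same argument as the paper's: Section 3 of the paper is precisely that case analysis (rank-one structure matrix, the obstruction $1+a_2^2c_1+a_3^2c_2$ to the extension property, sign bookkeeping because real rescalings $e_i\mapsto\lambda_ie_i$ multiply squares by $\lambda_i^2>0$), so that part matches. Your second half, by contrast, is a genuinely different route: the paper never really proves pairwise non-isomorphism --- after the reduction it simply declares the theorem proved, supported only by scattered remarks that certain representatives admit no degenerate two-dimensional evolution ideal generated by one element --- whereas your reformulation via the pair $(b,u_0)$ with $xy=b(x,y)\,u_0$, isomorphisms being exactly the similarities $\phi^{*}b'=\mu b$ with $\phi(u_0)=\mu u_0'$, is a complete and principled framework for that half.

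The genuine gap sits exactly at the step you postponed, and it is fatal to the statement as printed: the quadruple (nullity, signature up to swap, class of $q(u_0)$, $u_0\in\mathrm{rad}\,b$) does \emph{not} take distinct values on the thirteen matrices, and in the coincident cases the ``direct construction'' you call for produces an isomorphism rather than ruling one out. Both mechanisms you flagged actually occur inside the list. First, $E_8$ and $E_{10}$ differ only by the permutation $e_2\leftrightarrow e_3$ of the natural basis, which is an evolution algebra isomorphism (both are nondegenerate of signature $(2,1)$ with $q(u_0)=1$). Second, $E_2$ and $E_3$ are identified by the negative-$\mu$ phenomenon, since their common $u_0=e_1+e_2$ is isotropic: the map $\phi(e_1)=-e_2$, $\phi(e_2)=-e_1$, $\phi(e_3)=-e_3$ satisfies $\phi(e_1)^2=e_2^2=-(e_1+e_2)=\phi(e_1^2)$, $\phi(e_2)^2=e_1^2=e_1+e_2=\phi(e_2^2)$, $\phi(e_3)^2=e_3^2=-(e_1+e_2)=\phi(e_3^2)$, so $E_2\cong E_3$. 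Hence the ``pairwise non-isomorphic'' claim is false and no choice of invariants can complete your plan as announced: the thirteen matrices fall into eleven isomorphism classes ($E_2\cong E_3$ and $E_8\cong E_{10}$; one checks with your quadruple that the remaining eleven are indeed pairwise non-isomorphic, e.g.\ $E_8\not\cong E_9$ because $\mu<0$ would force $q(u_0)$ to change sign along with the swap $(2,1)\leftrightarrow(1,2)$). The paper produced the members of each redundant pair in separate branches of the case analysis (e.g.\ $E_2$ in Case 1.1.1.1 and $E_3$ in Case 1.1.1.2) without comparing them across branches; your method, carried to completion, detects this error, so the correct conclusion of your approach is a corrected eleven-item classification, not a verification of the theorem as stated.
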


\begin{remark}
One can classify real three-dimensional evolution algebras in case $dim(E^2)\neq1$.
But it will contain too long cases and subcases.
\end{remark}

\section{Approximation of three-dimensional evolution algebras
($dim(E^2)=1$)}\label{section:5}

In this section for the
evolution algebras $E_{i}, i=\overline{1,13}$ from
Theorem \ref{theorem3dim} we will construct evolution
algebras corresponding to fixed points of the operator $F$.

Let $E$ be three dimensional evolution algebra with the matrix
$(a_{ij}), i,j\in\{1,2,3\}$. We will rewrite the operator
$F$ for this evolution algebra as:
\begin{displaymath}
F: \left\{ \begin{array}{ll}
x_{1}'=a_{11}x_{1}^2+a_{21}x_{2}^2+a_{31}x_{3}^2, \\[2mm]
x_{2}'=a_{12}x_{1}^2+a_{22}x_{2}^2+a_{32}x_{3}^2, \\[2mm]
x_{3}'=a_{13}x_{1}^2+a_{23}x_{2}^2+a_{33}x_{3}^2.
\end{array} \right.
\end{displaymath}

Jacobian of the operator $F$ at the point $x$ has a form
\begin{displaymath}
J_{F}(x)=\left( \begin{array}{cccccc}
2a_{11}x_{1} & 2a_{21}x_{2} & 2a_{31}x_{3} \\
2a_{12}x_{1} & 2a_{22}x_{2} & 2a_{32}x_{3} \\
2a_{13}x_{1} & 2a_{23}x_{2} & 2a_{33}x_{3}
\end{array} \right).
\end{displaymath}

Following \cite{9} and \cite{3} we define an evolution algebra $\widetilde E$ with matrix
$J_{F}(x)$ as the matrix of structural constants.

There is no non-zero fixed point of the operator $F$ for the
evolution algebras  $E_{i},$  $i\in\{1,2,3,$ $11,12,13\}$ and
$(1;0;0)$ is the unique fixed point of the operator $F$ for the
evolution algebras $E_{i}, i=\overline{4,10}$. So
$$J_{F}(1;0;0)=\left( \begin{array}{cccccc}
2 & 0 & 0 \\
0 & 0 & 0 \\
0 & 0 & 0
\end{array} \right).$$
It is easy to see that the evolution algebra with the matrix $J_{F}(1;0;0)$
is isomorphic to the evolution algebra $E_{4}$.

\section{Acknowledgement}

The author is indebted to Professors Sh.Ayupov and U.Rozikov for 
the useful advice and valuable comments.

The author thanks referees for their helpful comments.

\bigskip
\begin{figure}[ht!]
\noindent\rule{\textwidth}{0.5pt}\\[-.8\baselineskip]\rule{\textwidth}{0.5pt}\\
\begin{minipage}[b]{0.5\linewidth}
\vspace{0.1cm}
\qquad\qquad\qquad\epsfig{file=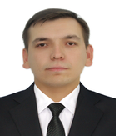,height=1.71in,width=1.45in}
\end{minipage}
\begin{minipage}[b]{7cm}
\bigskip \bigskip\footnotesize{\textbf{ Anvar Imomkulov}
         is a PhD student of Institute of Mathematics, Tashkent, Uzbekistan.
        He graduated from National University of Uzbekistan (2010).}
        \end{minipage}\\
\end{figure}

\end{document}